\numberwithin{equation}{section}
\theoremstyle{plain}
\newtheorem{Th}{Theorem}[section]
\newtheorem{Lemma}[Th]{Lemma}
\newtheorem{Prop}[Th]{Proposition}
\theoremstyle{definition}
\newtheorem{?}[Th]{Problem}
\providecommand{\vol}[1]{\left\lvert#1\right\rvert}
\providecommand{\abs}[1]{\left\lvert#1\right\rvert}
\providecommand{\norm}[1]{\left\lVert#1\right\rVert}
\newcommand{\R}{\mathbb{R}} 
\newcommand{\Pro}{\mathbb{P}} 
\newcommand{\Sp}{\mathbb{S}} 
\newcommand*{\ind}{\mbox{$\mathds{1}$}} 
\begin{document}

\title[A stochastic Pr\'{e}kopa-Leindler inequality]{A stochastic
  Pr\'ekopa-Leindler inequality for log-concave functions}

\author[P. Pivovarov \& J. Rebollo Bueno]{Peter Pivovarov \& Jes\'us
  Rebollo Bueno}

\address{University of Missouri \\ Department of Mathematics
  \\ Columbia MO 65201}

\email{pivovarovp@missouri.edu \& jrc65@mail.missouri.edu}


\maketitle

 \begin{abstract}
   The Brunn-Minkowski and Pr\'{e}kopa-Leindler inequalities admit a
   variety of proofs that are inspired by convexity. Nevertheless, the
   former holds for compact sets and the latter for integrable
   functions so it seems that convexity has no special signficance.
   On the other hand, it was recently shown that the Brunn-Minkowski
   inequality, specialized to convex sets, follows from a local
   stochastic dominance for naturally associated random polytopes. We
   show that for the subclass of $\log$-concave functions and
   associated stochastic approximations, a similar stochastic
   dominance underlies the Pr\'{e}kopa-Leindler inequality.
  \end{abstract}

\section{Introduction}
\label{intro}

The Brunn-Minkowski inequality governs the behavior of volume and
Minkowski addition of compacts sets $K, L\subseteq \mathbb{R}^n$:
\begin{equation}\label{Brunn-Minkowski}
  \vol{K+L}^{1/n} \geq \vol{K}^{1/n} + \vol{L}^{1/n};
\end{equation}
see \cite{schcon, garbru}.  As an isoperimetric principle,
\eqref{Brunn-Minkowski} can equivalently be stated in the form
\begin{equation}
  \label{Isoperimetric-Brunn-Minkowski}
  \vol{K+L}\geq \vol{K^* + L^*},
\end{equation}
where $A^*$ denotes the Euclidean ball of the same volume as $A$.  In
\cite{paopivrand}, it is shown that when one specializes to convex
bodies $K, L\subseteq\mathbb{R}^n$, then
\eqref{Isoperimetric-Brunn-Minkowski} admits a stronger stochastic
formulation for random polytopes.  Namely, for each convex body
$K\subseteq \mathbb{R}^n$, we sample independent random vectors
$X_1,\ldots,X_N$ uniformly in $K$ and associate a random polytope
\begin{equation*}
  [K]_N = {\rm conv}\{X_1,\ldots,X_N\},
\end{equation*}
where ${\rm conv}$ means convex hull.  Then sampling independent
random vectors in each of the bodies in
\eqref{Isoperimetric-Brunn-Minkowski} (on a common underlying
probability space $(\Omega, \mathcal{A},\mathbb{P})$) leads to the
following for each $\alpha\geq 0$,
\begin{equation}\label{Probability-Brunn-Minkowski}
  \Pro(\vol{[K]_{N}+[L]_{M}}>\alpha) \geq
  \Pro(\vol{[K^*]_{N}+[L^*]_{M}}>\alpha).
\end{equation}
By the law of large numbers, as $N,M\to\infty$, the random polytopes
converge to their ambient bodies.  Thus
\eqref{Isoperimetric-Brunn-Minkowski} follows from a ``local''
stochastic dominance for random polytopes that naturally approximate
convex bodies. Inequalities for the expected volume of the polytopes
$[K]_N$ have a long history in stochastic geometry, including
Blaschke's resolution of Sylvester's four point problem
\cite{Bla:1917}, and its generalizations to higher dimensions by
Busemann \cite{Bus:1953} and Groemer \cite{gromean}.  We explicitly
mention only Groemer's inequality on random polytopes which is
recovered from \eqref{Probability-Brunn-Minkowski} when working with
one body $K$:
\begin{equation}
  \label{eqn:Groemer}
  \mathbb{E}\vol{[K]_N}\geq \mathbb{E}\vol{[K^*]_N}.
\end{equation} 
For related work, see, e.g., \cite{camcolgroanote, GT_radius,
  paopivprob, paopivrand}, \cite[Chapter 10]{schcon} and the
references therein.

The Pr\'{e}kopa-Leindler inequality \cite{leiconv, preonlog1,
  preonlog2} asserts that for integrable functions
$f,g,h:\R^n\to[0,\infty)$ and $0<\lambda<1$, if
\begin{equation}
  \label{PL_assumption}
  h(\lambda x+(1-\lambda)y) \geq f(x)^{\lambda}g(y)^{1-\lambda}
\end{equation}for all $x,y\in\R^n$, then
\begin{equation}\label{Prekopa-Leindler}
	\displaystyle\int h \geq \left(\int
        f\right)^{\lambda}\left(\int g\right)^{1-\lambda}.
\end{equation}
By taking $f=\mathds{1}_K$, $g=\mathds{1}_L$, and
$h=\mathds{1}_{\lambda K+(1-\lambda) L}$ \eqref{Prekopa-Leindler}
implies \eqref{Brunn-Minkowski}.  In this sense the Pr\'ekopa-Leindler
inequality can be viewed as a functional extension of the
Brunn-Minkowski inequality. Related variations of
\eqref{Prekopa-Leindler} provide a basis for functional versions of
Brunn's principle; see Borell \cite{borconv}, Rinott
\cite{rinonconvexity}, Brascamp and Lieb \cite{braliebestconstants,
  bralieonextension}. The reach of the Pr\'{e}kopa-Leindler inequality
now extends into various branches of analysis, geometry, probability,
information theory, among other fields; see, e.g.,
\cite{garbru,CE_PL,CEK_12,CEM_PL,CEMS,madmelzu} and the references
therein.

As for the Brunn-Minkowski inequality, our focus here is on a form of
\eqref{Prekopa-Leindler} involving symmetric decreasing rearrangements
$f^*$ and $g^*$ (see \S \ref{prelim} for definitions). Brascamp and
Lieb proved \eqref{Prekopa-Leindler} via rearrangement inequalities
through the reverse Young inequality \cite{braliebestconstants}. More
recently, Melbourne \cite{mel} derived families of rearrangement
inequalities refining \eqref{Prekopa-Leindler}; the version we study
here involves Borel measurable $f,g:\R^n\to[0,\infty)$, $0<\lambda< 1$
  and the sup-convolution
\begin{equation*}
    (f\star_{\lambda} g)(v) =
  {\sup}\{f^{\lambda}(x)g^{1-\lambda}(y):{v=\lambda x+(1-\lambda)y}\},
\end{equation*}for which one has
\begin{equation}
  \label{Isoperimetric-Prekopa-Leindler}
  \displaystyle\int_{\R^n}{(f\star_{\lambda} g)(v)}dv \geq
  \int_{\R^n}{(f^*\star_{\lambda} g^*)(v)}dv.
\end{equation}
We will show that a similar ``local'' stochastic dominance underlies
\eqref{Isoperimetric-Prekopa-Leindler} when one focuses only on
$\log$-concave functions $f$ and $g$, i.e., when $\log f$ and $\log g$
are concave on their supports.

To define our stochastic model, for each integrable $\log$-concave
function $f:\R^n\to[0,\infty)$, we sample independent random vectors
  $(X_1,Z_1),\ldots,(X_N,Z_N)$ in $\R^n\times[0,\infty)$ according to the
    uniform Lebesgue measure on the region under the graph of $f$:
    \begin{equation}
      \label{eqn:graph}
    G_f:=\{(x,z)\in\R^n\times[0,\infty):x\in{\rm supp}f, z\leq f(x)\}.
    \end{equation}
    We denote by $[f]_N$ the least
    $\log$-concave function supported on the set $\mathop{\rm
      conv}\{X_1,\ldots,X_N\}$ with $[f]_{N}(X_i)\geq Z_i$,
    $i=1,\ldots,N$. In other words, for the convex domain
    \begin{equation}
      \label{Stochastic-Body}
            H_{f,N}:={\rm
              conv}\{(X_1,\log{Z_1}),\ldots,(X_N,\log{Z_N})\},
    \end{equation}
    we set
\begin{equation}\label{Random-Least-Log-Concave}
  [f]_N(x) := \exp\left(\sup\{z:(x,z)\in H_{f,N}\}\right).
\end{equation}
This model of approximation works equally well for $f^*$.  Indeed,
when $f$ is $\log$-concave, the same is true of $f^*$.  Thus for
independent random samples for each of the $\log$-concave functions
$f,g, f^*, g^*$, we can now state our main result.
\begin{Th}
  \label{Theo-Stochastic-PL}
  Let $f,g:\R^n\to[0,\infty)$ be $\log$-concave functions and
    $N,M>n+1$. Then, for any $\alpha>0$,
  \begin{equation*}
    \Pro\left(\int_{\R^n}([f]_N\star_{\lambda}[g]_M)(v)dv
    >\alpha\right) \geq
    \Pro\left(\int_{\R^n}([f^*]_N\star_{\lambda}[g^*]_M)(v)dv
    >\alpha\right).
  \end{equation*}
\end{Th}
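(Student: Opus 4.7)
The plan is to lift the problem to $\R^{n+1}$ and adapt the Steiner-symmetrization stochastic dominance of \cite{paopivrand} to a weighted random polytope setting. For each log-concave $f$, the log-hypograph
\[
G_f^{\log} = \{(x,w) \in \R^n\times\R : w \leq \log f(x)\}
\]
is convex. The change of variables $(X,Z) \mapsto (X,W) = (X,\log Z)$ converts uniform sampling on $G_f$ into sampling with density $e^w\ind_{G_f^{\log}}(x,w)/\int f$ on $\R^{n+1}$, and $H_{f,N}$ becomes the convex hull of $N$ such samples inside $G_f^{\log}$. Crucially, $f \mapsto f^*$ corresponds to the slicewise symmetric decreasing rearrangement of the $x$-sections of $G_f^{\log}$ at fixed $w$, and the weight $e^w$ is invariant under any symmetrization acting only in $x$-coordinates.

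Using the identity $\int h = \iint e^w \ind_{G_h^{\log}}(x,w)\,dx\,dw$ together with $G_{[f]_N \star_{\lambda} [g]_M}^{\log} = \lambda G_{[f]_N}^{\log} + (1-\lambda) G_{[g]_M}^{\log}$ (because sup-convolution of log-concave functions is Minkowski addition of log-hypographs, and $G_{[f]_N}^{\log} = H_{f,N} + (-\infty, 0]\,e_{n+1}$), one rewrites
\[
\int_{\R^n} ([f]_N \star_\lambda [g]_M)(v)\,dv = \iint_{\R^{n+1}} e^z \ind_{\widetilde{K}_{N,M}}(v,z)\,dv\,dz,
\]
with $\widetilde{K}_{N,M} = \lambda H_{f,N} + (1-\lambda) H_{g,M} + (-\infty, 0]\,e_{n+1}$. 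The theorem thus becomes a stochastic dominance for the $e^z$-weighted volume of a Minkowski combination of random polytopes drawn from log-concave densities on convex bodies in $\R^{n+1}$.

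The next reduction is to a one-step Steiner symmetrization. By a Bianchi-Burchard-Gronchi approximation, a suitable sequence of $x$-hyperplane Steiner symmetrizations $S_{e_k}$ converges to the symmetric decreasing rearrangement, so it suffices to prove, for every direction $e \in \R^n$,
\[
\Pro\!\left(\int [f]_N\star_\lambda [g]_M > \alpha\right) \geq \Pro\!\left(\int [S_e f]_N\star_\lambda [S_e g]_M > \alpha\right).
\]
Condition on the heights $W_i, W_j'$: then $X_i \sim \mathrm{Unif}(A_i)$ with $A_i = \{f \geq e^{W_i}\}$ and $Y_j \sim \mathrm{Unif}(B_j)$ with $B_j = \{g \geq e^{W_j'}\}$, and $S_e$ sends these to $S_e A_i, S_e B_j$. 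Couple the samples via a Campi-Gronchi shadow system $X_i(t), Y_j(t)$ along $e$, $t \in [-1,1]$, parametrized so that $t = \pm 1$ recovers the original samples and their $e$-reflections (equal in distribution), while $t = 0$ gives samples uniform on the $e$-symmetrized level sets. The key claim is convexity on $[-1,1]$ of
\[
F(t) = \iint e^z \ind_{\widetilde{K}_{N,M}(t)}(v,z)\,dv\,dz,
\]
where $\widetilde{K}_{N,M}(t)$ is built from the shadowed samples. Convexity yields $F(0) \leq \tfrac{1}{2}(F(1) + F(-1))$, which together with the distributional identity $F(1) \stackrel{d}{=} F(-1)$ gives the one-step stochastic dominance.

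The principal obstacle is the convexity of $F(t)$ along the shadow system. Convexity of volume along classical shadow systems is the Campi-Gronchi theorem, but here one must handle both a Minkowski combination of two shadow-dependent random polytopes and the $e^z$-weighted (rather than Lebesgue) measure on $\R^{n+1}$. Establishing this calls for combining Campi-Gronchi-type convexity with Brunn's concavity principle applied slicewise in $z$ (via a layer-cake reduction in the exponential weight), together with the standard inclusion $S_e A + S_e B \subseteq S_e(A+B)$ for convex $A, B$ applied to the log-hypographs. A final limit argument along the Steiner sequence, justified by tightness of the random polytopes when $N, M > n+1$, completes the proof.
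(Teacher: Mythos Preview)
Your $\R^{n+1}$ lift via log-hypographs is a genuinely different route from the paper's, and in some ways more direct: the paper instead passes through the $s$-concave approximation $[f_\varepsilon]_{N,s}$ and the bodies of revolution $\mathcal{K}^s_{[\cdot]}\subset\R^{n+s}$, reduces $\int [f_\varepsilon]_{N,s}\star_{\lambda,s}[g_\varepsilon]_{M,s}$ to the Lebesgue volume of an $\mathcal{M}$-combination of Euclidean balls (Lemma~3.1 and \eqref{eqn:Ksum}), proves Steiner convexity of that volume (Proposition~4.2), applies the Rogers--Brascamp--Lieb--Luttinger--Christ inequality (Theorem~4.1) after conditioning on the heights $z_i$, and only then sends $s\to\infty$ and $\varepsilon\to 0$. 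Your exponential-weight formulation in $\R^{n+1}$ avoids the $s\to\infty$ limit entirely; what survives of your argument is essentially the observation that, for fixed heights $W_i,W_j'$, the map $(X_i,Y_j)\mapsto \int e^z\,\mathds{1}_{\widetilde K_{N,M}}$ is Steiner convex in the $x$-variables, which is exactly what your slicewise shadow-system remark establishes (each $z$-slice of a shadow system along $e\perp e_{n+1}$ is again a shadow system, so its $n$-volume is convex and even in $t$, and integrating against $e^z\,dz$ preserves this).

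However, the step where you pass from this convexity to stochastic dominance has a real gap. You claim a coupling ``via a Campi--Gronchi shadow system $X_i(t)$'' with $X_i(1)=X_i$, $X_i(-1)=R_eX_i$, and $X_i(0)$ \emph{uniform on the Steiner-symmetrized level set} $S_eA_i$. No such linear-in-$t$ coupling exists: the midpoint of $X_i$ and its $e$-reflection is the projection $P_{e^\perp}X_i$, which lies on $e^\perp$ and is certainly not distributed uniformly on $S_eA_i$. Consequently the inequality $F(0)\le F(1)$ you obtain compares the functional at projected points, not at symmetrized samples, and does not yield the desired one-step dominance. The correct bridge---and this is precisely what the paper does in its setting---is to recognize that Steiner convexity of the functional (even, quasi-convex restriction to each line parallel to $e$) is exactly the hypothesis of Theorem~4.1 (Christ's form of RBLLC), and to apply that inequality to the inner integral over $(x_1,\ldots,x_{N+M})$ after conditioning on the heights. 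Replace the shadow-system coupling by that invocation and your argument goes through; as written, the coupling step fails.
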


When $N, M\rightarrow\infty$, the latter implies
\eqref{Isoperimetric-Prekopa-Leindler}.  Thus, in this sense, a
``local'' stochastic dominance underlies the Pr\'{e}kopa-Leindler
inequality for $\log$-concave $f$ and $g$.

There is a solid foundation for extending geometric inequalities from
convex sets to $\log$-concave functions or more general classes, e.g.,
\cite{ball-phd, klamillog, artklamilsan, frameyfun, lehdir, milrotmix,
  bobcolfraque, floartseg}.  While a number of isoperimetric
inequalities in addition to \eqref{Brunn-Minkowski} have found
stochastic versions \cite{paopivrand}, no similar progress has been
made for stochastic functional inequalities. Theorem
\ref{Theo-Stochastic-PL} is a first step towards stochastic
isoperimetric inequalities for random functions. Working with just one
function $f$, we obtain a functional analogue of Groemer's result
\eqref{eqn:Groemer} for random polytopes.

\begin{Th}
  \label{Theo-Stochastic-Groemer}
  Let $f:\R^n\to[0,\infty)$ be an integrable $\log$-concave function
    and $N>n+1$. Then, for any $\alpha>0$,
    \begin{equation*}
      \Pro\left(\int_{\R^n}{[f]_N(x)}dx >\alpha\right)\geq
      \Pro\left(\int_{\R^n}{[f^*]_N(x)}dx >\alpha\right).
    \end{equation*}
\end{Th}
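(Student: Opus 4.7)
My plan is to deduce Theorem \ref{Theo-Stochastic-Groemer} from Theorem \ref{Theo-Stochastic-PL} by passing to the limit $\lambda\to 1^-$. Fix $M>n+1$ and, on an enlarged probability space, sample independently of the $(X_i,Z_i)$ a second family $(Y_1,W_1),\dots,(Y_M,W_M)$ uniformly in $G_f$, together with the analogous starred sample in $G_{f^*}$. Applied with the auxiliary log-concave function $g=f$, Theorem \ref{Theo-Stochastic-PL} gives, for every $0<\lambda<1$ and every $\alpha>0$,
\begin{equation}\label{eq:plan-from-PL}
\Pro\!\left(\int_{\R^n}([f]_N\star_\lambda[f]_M)(v)\,dv>\alpha\right) \geq \Pro\!\left(\int_{\R^n}([f^*]_N\star_\lambda[f^*]_M)(v)\,dv>\alpha\right).
\end{equation}

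The heart of the plan is to establish, almost surely, the convergence $\int_{\R^n}([f]_N\star_\lambda[f]_M)(v)\,dv\to\int_{\R^n}[f]_N(v)\,dv$ as $\lambda\to 1^-$, together with its analogue for the starred functions. On each sample, $[f]_N$ and $[f]_M$ are continuous log-concave functions supported in a common Euclidean ball $RB_2^n$. For $v$ in the interior of $\mathrm{supp}\,[f]_N$, choose $y_0$ in the interior of $\mathrm{supp}\,[f]_M$; then $x_\lambda:=(v-(1-\lambda)y_0)/\lambda\to v$, and $[f]_N^\lambda(x_\lambda)[f]_M^{1-\lambda}(y_0)\to[f]_N(v)$ gives a lower bound. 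Conversely, every admissible pair $(x,y)$ with $v=\lambda x+(1-\lambda)y$ satisfies $|x-v|\leq(1-\lambda)R$, so continuity of $[f]_N$ at $v$ combined with the crude estimate $[f]_M^{1-\lambda}(y)\leq\|[f]_M\|_\infty^{1-\lambda}\to 1$ supplies a matching upper bound. For $v$ at positive distance from $\mathrm{supp}\,[f]_N$, the sup-convolution vanishes as soon as $(1-\lambda)R$ is smaller than that distance. Since the boundary of $\mathrm{supp}\,[f]_N$ has Lebesgue measure zero, we obtain pointwise a.e.\ convergence of the integrands; together with the common support bound and the uniform $L^\infty$ bound $\max(\|[f]_N\|_\infty,\|[f]_M\|_\infty)$, dominated convergence yields the claimed $L^1$ limit. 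Finally, passing \eqref{eq:plan-from-PL} to the limit at every $\alpha>0$ that is a continuity point of both limit survival functions produces $\Pro(\int_{\R^n}[f]_N(x)\,dx>\alpha)\geq\Pro(\int_{\R^n}[f^*]_N(x)\,dx>\alpha)$, and right-continuity of survival functions extends this inequality to all $\alpha>0$.

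\emph{Main obstacle.} The technical work is concentrated in the pointwise a.e.\ convergence of the sup-convolution on each sample. The crux is that the supremum defining $([f]_N\star_\lambda[f]_M)(v)$ is essentially attained near $x=v$, so continuity of $[f]_N$ on the interior of its support (guaranteed by log-concavity) controls one factor while the crude bound $\|[f]_M\|_\infty^{1-\lambda}\to 1$ handles the other. Once this is in place, dominated convergence and the standard passage of stochastic dominance through almost-sure limits close the argument.
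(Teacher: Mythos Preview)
Your argument is correct, and it is genuinely different from the paper's proof. The paper proves Theorem~\ref{Theo-Stochastic-Groemer} directly, via the same machinery used for Theorem~\ref{Theo-Stochastic-PL}: it truncates to $f_\varepsilon$, passes to the $s$-concave approximation $[f_\varepsilon]_{N,s}$, uses Lemma~\ref{lemma:rotation} to identify $\mathcal{K}^s_{[f_\varepsilon]_{N,s}}$ with a convex hull of $s$-dimensional balls, and then applies Proposition~\ref{SC-of-rotation-with-fixed-heights} together with the rearrangement inequality (Theorem~\ref{RBLLC}) to that volume functional. In other words, the paper treats Theorem~\ref{Theo-Stochastic-Groemer} as the one-body special case of the machinery, reproved from scratch, rather than as a corollary of Theorem~\ref{Theo-Stochastic-PL}.

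Your route---take $g=f$ in Theorem~\ref{Theo-Stochastic-PL} and send $\lambda\to1^-$---is perfectly legitimate since the paper's proof of Theorem~\ref{Theo-Stochastic-PL} nowhere uses Theorem~\ref{Theo-Stochastic-Groemer}. The pointwise convergence of $([f]_N\star_\lambda[f]_M)(v)$ to $[f]_N(v)$ is handled correctly: on a fixed sample both random functions are bounded, continuous on their (compact convex) supports, and the localisation estimate $|x-v|\le C(1-\lambda)$ together with $\|[f]_M\|_\infty^{1-\lambda}\to1$ gives the matching upper bound. Dominated convergence on the common ball $RB_2^n$ and the passage through continuity points of the survival functions (then right-continuity) are standard. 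What your approach buys is economy: once Theorem~\ref{Theo-Stochastic-PL} is in hand, no higher-dimensional bodies or rearrangement inequality need be re-invoked. What the paper's approach buys is self-containment and a transparent illustration of the method in the simplest case; it also avoids introducing the auxiliary sample $[f]_M$ and the limiting argument in $\lambda$.
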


The techniques from \cite{paopivrand} leading to the stochastic
Brunn-Minkowski inequality \eqref{Probability-Brunn-Minkowski} involve
random convex sets.  New ingredients are needed to treat functions and
their stochastic approximations. Various proofs of the
Pr\'{e}kopa-Leindler \eqref{Prekopa-Leindler} inequality rely on the
Brunn-Minkowski inequality. Of particular significance to our approach
is the work of Klartag \cite{klamarg}; he derives various functional
inequalities, including \eqref{Prekopa-Leindler}, as ``marginals'' of
geometric inequalities for convex bodies in higher dimensions.  The
reduction to sets in higher dimensions fits well with the stochastic
approximation that we use.  In particular, they interface well with
multiple integral rearrangement inequalities of Rogers
\cite{Rogers_single}, Brascamp-Lieb-Luttinger \cite{BLL} and Christ
\cite{christestimates}.

\section{Preliminaries}
\label{prelim}
Let $K$ be a convex set in $\mathbb{R}^n$, $\theta$ on the unit sphere
$\Sp^{n-1}$ and $P:=P_{\theta^{\perp}}$ the orthogonal projection onto
$\theta^{\perp}$. We define $u_K:PK\rightarrow \R$ by
\begin{equation*}
  \label{upperfunction}
u_K(y):=u(K,y):= \sup\{\lambda:y+\lambda\theta\in K\}
\end{equation*}
and $\ell_K:PK\rightarrow \R$ by
\begin{equation*}
  \label{lowerfunction}
\ell_K(y):=\ell(K,y):=\inf\{\lambda:y+\lambda\theta\in K\}.
\end{equation*}
Notice that, for $K$ convex, $u_K$ and $\ell_K$ are concave and
convex, respectively.

We recall that the Steiner symmetral of a non-empty compact set
$A\subseteq\R^n$ with respect to $\theta^{\perp}$,
$S_{\theta^{\perp}}A$, is the set with the property that for each line
$l$ orthogonal to $\theta^{\perp}$ and meeting $A$, the set
$l\cap S_{\theta^{\perp}}A$ is a closed segment with midpoint on
$\theta^{\perp}$ and length equal to that of the set $l\cap A$. The
mapping $S_{\theta^{\perp}}:A\rightarrow S_{\theta^{\perp}}A$ is
called the Steiner symmetrization of $A$ with respect to
$\theta^{\perp}$. In particular, if $K$ is a convex body
\begin{equation*}
	S_{\theta^{\perp}}K = \{x+\lambda \theta:x\in PK,
        -\dfrac{u_K(x)-\ell_K(x)}{2} \leq \lambda \leq
        \dfrac{u_K(x)-\ell_K(x)}{2}\}.
\end{equation*}
This shows that $S_{\theta^{\perp}}K$ is convex, since the function
$u_K-\ell_K$ is concave. Moreover, $S_{\theta^{\perp}}K$ is symmetric with
respect to $\theta^{\perp}$, it is closed, and by Fubini's theorem it
has the same volume as $K$.

Let $A\subseteq\R^n$ be a Borel set with finite Lebesgue measure. The
symmetric rearrangement, $A^*$, of $A$ is the open ball with center at
the origin whose volume is equal to the measure of $A$. Since we
choose $A^*$ to be open, $\ind_{A^*}$ is lower semicontinuous. The
symmetric decreasing rearrangement of $\ind_A$ is defined by
$\ind^*_A=\ind_{A^*}$. We say a Borel measurable function
$f:\R^n\to[0,\infty)$ vanishes at infinity if for every $t>0$, the set
  $\{x\in\R^n:f(x)>t\}$ has finite Lebesgue measure. In such a case,
  the symmetric decreasing rearrangement $f^*$ is defined by
  \begin{equation*}
    f^*(x) = \int_0^{\infty}{\ind^*_{\{f>t\}}(x)}dt =
    \int_0^{\infty}{ \ind_{\{f>t\}^*}(x)}dt.
  \end{equation*}
  Observe that $f^*$ is radially symmetric, radially decreasing, and
  equimeasurable with $f$, i.e., $\{f>t\}$ and $\{f^*>t\}$ have the
  same measure for each $t>0$. Let $\{e_1,\ldots,e_n\}$ be an
  orthonormal basis of $\R^n$ such that $e_1=\theta$. Then, for $f$
  vanishing at infinity, the Steiner symmetral $f(\cdot|\theta)$ of
  $f$ with respect to $\theta^{\perp}$ is defined as follows: set
  $f_{(x_2,\ldots,x_n),\theta}(t)=f(t,x_2,\ldots,x_n)$ and define
  $f^*(t,x_2,\ldots,x_n|\theta):=(f_{(x_2,\ldots,x_n),\theta})^*(t)$. In
  other words, we obtain $f^*(\cdot|\theta)$ by rearranging $f$ along
  every line parallel to $\theta$. We refer to the books
  \cite{lielosanalysis,simconvexity} or the introductory notes
  \cite{burashort} for further background material on rearrangement of
  functions.

\section{Approximation of $\log$-concave functions}

\label{exact-approx}

We start by recalling an approach to derving integral inequalities for
functions by using certain higher-dimensional bodies of
revolution. This method was used by Artstein, Klartag and Milman in
\cite{artklamilsan} to extend Ball's functional Blaschke-Santal\'{o}
inequality \cite{ball-phd}; see \cite{FraMey_08, BarFra_13} for
further developments.  Such bodies were also used in the first
derivation of the functional affine isoperimetric inequality
\cite{AAKSW}; see also \cite{CFGLSW}.  As we mentioned in the
introduction, Klartag \cite{klamarg} used the method to prove the
Pr\'{e}kopa-Leindler inequality and we will review several key points
for later use.

A function $f:\R^n\to[0,\infty)$ is called {\it $\log$-concave} if
  $\log f$ is concave on its support. In accordance with the usage of
  {\it $s$-concavity} in \cite{artklamilsan, klamarg}, we say $f$ is
  {\it $s$-concave} if $f^{1/s}$ is concave on its support; this is
  not the same as other common uses of the term, e.g. \cite{borconv},
  however, it fits with the approach taken here. In particular, any
  $s$-concave function, for $s>0$, is also log-concave. A useful
  approximation of a log-concave function $f$ by $s$-concave functions
  $f_s$ is given by \begin{equation}\label{s-approximation}
    f_s(x)=\left(1+\frac{\log{f(x)}}{s}\right)_+^s,
    \end{equation}where $x_+=\max\{x,0\}$.
With this choice, $f_s\leq f$ for all $s>0$, and since a log-concave
function is continuous on its support one has $f_s\rightarrow f$
locally uniformly on $\R^n$ as $s\to\infty$.

Let $(e_1,\ldots,e_{n+s})$ be an orthonormal basis of
$\R^{n+s}=\R^n\times\R^s$. For a measurable function
$f:\R^n\to[0,\infty)$, we can associate the set
  \begin{equation}
    \label{attachedbody}
    \mathcal{K}^s_f:=\{(x,y)\in\R^n\times\R^s: x\in\overline{{\rm
        supp}f}, \abs{y}\leq f^{1/s}(x)\}, 
  \end{equation}where $\abs{\cdot}$ denotes the usual Euclidean norm.
With this terminology, Brunn's principle means that a function on
$\R^n$ is $s$-concave if and only if it is a marginal of a uniform
measure on a convex body in $\R^{n+s}$. Thus $\mathcal{K}^s_f$ is
convex if and only if $f$ is $s$-concave. Moreover,
\begin{equation*}
  f(x)=\kappa_s^{-1}\displaystyle\int_{\R^s}{\ind_{\mathcal{K}^s_f}(x,y)}dy,
\end{equation*}
where $\kappa_s$ is the volume of the $s$-dimensional Euclidean ball
and hence
\begin{equation}
  \label{Volume-Integral}
  \displaystyle\int_{\R^n}{f(x)}dx = \kappa_s^{-1}\vol{\mathcal{K}^s_f}.
\end{equation}

We also recall the notion of homothety of bodies, the $s$-Minkowski
sum of two non-negative functions on $\R^n$, and their relation with
the set \eqref{attachedbody}. Let $\lambda>0$. We define the function
$\lambda\cdot_s f:\R^n\to[0,\infty)$ to be
    \begin{equation*}
        [\lambda\cdot_sf](x)=\lambda^s f\left(\frac{x}{\lambda}\right).
    \end{equation*}	
This way $\mathcal{K}^s_{\lambda\cdot_s f}=\lambda\mathcal{K}^s_f$ so
$\lambda\cdot_s f$ is a functional analog of homothety of bodies,
where if $f$ is an $s$-concave function, so is $\lambda\cdot_s
f$. Define also the $s$-Minkowski sum of two functions
$f,g:\R^n\to[0,\infty)$ as
    \begin{equation*}
      [f\oplus_s g](v)= {\sup}
      \left\{\left(f(x)^{\frac{1}{s}}+g(y)^{\frac{1}{s}}\right)^s:
      {v=x+y},{x\in{\rm supp}(f), y\in{\rm supp}(g)}\right\}
    \end{equation*}
whenever $v\in{\rm supp}(f)+{\rm supp}(g)$. If not, we set $[f\oplus_s
  g](v)=0$. This function is $s$-concave whenever $f$ and $g$ are, and
    \begin{equation*}
       \mathcal{K}^s_{f\oplus_s g}=\mathcal{K}^s_f+\mathcal{K}^s_g.
    \end{equation*}
By (\ref{attachedbody}) the latter body is convex when $f$ and $g$
are $s$-concave. Let us also denote
\begin{eqnarray*}
  (f\star_{\lambda,s}g)(v) &:=&
  ((\lambda\cdot_sf)\oplus_s((1-\lambda)\cdot_sg))(v),
\end{eqnarray*}and, as in the introduction,
\begin{eqnarray*} 
  (f\star_{\lambda}
          g)(v) &:=& {\sup}\{f^{\lambda}(x)g^{1-\lambda}(y):v=\lambda
            x+(1-\lambda)y\}.
	\end{eqnarray*}
Therefore it follows that
\begin{equation}
  \label{eqn:Ksum}
        \mathcal{K}^s_{f\star_{\lambda,s}g} =
        \mathcal{K}^s_{\lambda\cdot_sf} +
        \mathcal{K}^s_{(1-\lambda)\cdot_sg} = \mathcal{K}^s_f +_{\lambda}
        \mathcal{K}^s_g,
    \end{equation}
where we have defined 
\begin{equation}
  \label{eqn:plus_lambda}
  K+_{\lambda} L := \lambda K + (1-\lambda)L.
\end{equation}

In this way, the Prekopa-Leindler inequality is derived in
\cite{klamarg} as a ``marginal'' of the Brunn-Minkowski inequality in
$\mathbb{R}^{n+s}$ when $s\rightarrow \infty$. We cannot directly
apply the stochastic Brunn-Minkowski inequality
\eqref{Probability-Brunn-Minkowski} to the bodies $K=\mathcal{K}^s_f$
and $L=\mathcal{K}^s_g$ as this would involve different measures in
each dimension $n+s$ and an increasing number of samples in each body.
Instead, we revisit the proof of \eqref{Probability-Brunn-Minkowski}
and study operations beyond the convex hull of points that can be used
in our stochastic approach. The following lemma provides a needed
link. Henceforth, we denote the $s$-dimensional Euclidean ball
centered at a point $x\in \mathbb{R}^n$ and $\rho\geq 0$ by
\begin{equation*}
B_{\rho}^s(x)=\{(x,\widehat{z})\in
\mathbb{R}^n\times\mathbb{R}^s:\abs{\widehat{z}}\leq \rho\}.
\end{equation*}
  
\begin{Lemma}
  \label{lemma:rotation}
  Let $w_i=(x_i,z_i)\in \R^n \times [0,\infty)$ for $i=1,\ldots,N$ and
    $s\in \mathbb{N}$. Let $T=T_{\{w_i\},s}$ be the least $s$-concave
    function supported on $\mathbb{\rm conv}\{x_1,\ldots,x_N\}$ such
    that $T(x_i)\geq z_i$, $i=1,\dots,N$, i.e.,
  \begin{equation}\label{least-s-concave}
    T(x) = \sup\{z^s\in\R:(x,z)\in{\rm
      conv}\{(x_i,r_i)\}_{i=1}^N \}, 
  \end{equation}where $r_i=z_i^{1/s}$, $i=1,\ldots,N$.  Then
  \begin{equation}
    \label{Rotation-Convex}
    \mathcal{K}^s_{T} = \mathop{\rm conv}\{B^s_{r_1}(x_1),\ldots,
    B^s_{r_N}(x_N)\}.
  \end{equation}
\end{Lemma}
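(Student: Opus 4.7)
The plan is to prove the set equality $\mathcal{K}^s_T = C$ (writing $C := \mathrm{conv}\{B^s_{r_i}(x_i)\}_{i=1}^N$) by comparing the two sets fiber-by-fiber over $\R^n$. The key observation is that both sides are invariant under orthogonal transformations of the $\R^s$-factor: each ball $B^s_{r_i}(x_i)$ is symmetric about $\R^n\times\{0\}$ and the convex hull preserves this symmetry, while $\mathcal{K}^s_T$ is manifestly $O(s)$-symmetric by definition (\ref{attachedbody}). Hence each $s$-dimensional fiber above a point $x\in\R^n$ is a Euclidean ball in $\{x\}\times\R^s$ centered at the origin, and it suffices to match the radii.

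To analyze the fibers of $C$, I will use two elementary identities: $\lambda B^s_r(x)=B^s_{\lambda r}(\lambda x)$ for $\lambda\ge 0$, and $B^s_\alpha(a)+B^s_\beta(b)=B^s_{\alpha+\beta}(a+b)$ (the Minkowski sum of two Euclidean balls over $\R^n$ is again such a ball). Iterating, any convex combination becomes
\[
\sum_{i=1}^N \lambda_i\, B^s_{r_i}(x_i) \;=\; B^s_{\sum_i\lambda_i r_i}\Big(\sum_i \lambda_i x_i\Big).
\]
Combined with the general fact that the convex hull of a union of convex sets is the union of their convex combinations, this yields
\[
C \;=\; \bigcup_{\lambda_i \ge 0,\, \sum_i\lambda_i = 1} B^s_{\sum_i\lambda_i r_i}\Big(\sum_i \lambda_i x_i\Big),
\]
so the fiber of $C$ above $x$ is the Euclidean ball of radius $\rho(x):=\sup\{\sum_i\lambda_i r_i:\lambda_i\ge 0,\ \sum_i\lambda_i=1,\ \sum_i\lambda_i x_i = x\}$ whenever $x\in\mathrm{conv}\{x_i\}$.

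To finish, I read off $T(x)^{1/s}$ directly from (\ref{least-s-concave}). A pair $(x,z)$ lies in $\mathrm{conv}\{(x_i,r_i)\}$ precisely when $z=\sum_i\lambda_i r_i$ and $x=\sum_i\lambda_i x_i$ for some $\lambda$ in the simplex; since $r_i\ge 0$ and $s\in\mathbb{N}$, maximizing $z^s$ is equivalent to maximizing $z$, giving $T(x)^{1/s} = \rho(x)$ for all $x\in\mathrm{conv}\{x_i\}$. Comparing fibers then yields $\mathcal{K}^s_T = C$.

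The main point that requires care, rather than a true obstacle, is the support condition in the definition (\ref{attachedbody}): $\mathcal{K}^s_T$ is cut off outside $\overline{\mathrm{supp}\,T}$, whereas the projection of $C$ onto $\R^n$ is the whole polytope $\mathrm{conv}\{x_i\}$. Since $T^{1/s}$ is concave on $\mathrm{conv}\{x_i\}$ (its hypograph there is $\mathrm{conv}\{(x_i,r_i)\}$ by construction), continuity ensures that the closure of $\{T>0\}$ equals $\mathrm{conv}\{x_i\}$ in the non-trivial case where some $r_i>0$; the degenerate case $r_i\equiv 0$ produces only a measure-zero set and can be handled separately. Apart from this bookkeeping, the argument is an exercise in the two ball identities and the representation of convex hulls.
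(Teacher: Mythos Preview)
Your proof is correct and follows essentially the same route as the paper's: both arguments identify the fiber of each side over a point $x\in\mathrm{conv}\{x_i\}$ with the $s$-ball of radius $\sup\{\sum_i c_i r_i:\sum_i c_i x_i=x\}$, with the paper doing this via a direct two-inclusion element chase and you via the ball identities $\sum_i \lambda_i B^s_{r_i}(x_i)=B^s_{\sum_i\lambda_i r_i}(\sum_i\lambda_i x_i)$ and the $O(s)$-symmetry. Your remark on the degenerate case $r_i\equiv 0$ is well taken; the paper's proof does not address it either, and it is immaterial for the applications.
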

\begin{proof}
For $x\in \mathop{\rm conv}\{x_1,\ldots,x_N\}$ and $\widehat{z}\in
\mathbb{R}^s$ with $(x,\widehat{z})\in \mathcal{K}^s_{T}$, there
exists non-negative $c_1,\ldots,c_N$ with $\sum_{i}c_i=1$ such that
\[
\abs{\widehat{z}} \leq T(x)^{1/s} =
\displaystyle\sum\limits_{i=1}^N{c_ir_i},
\]
and $x=\sum\limits_{i=1}^N{c_ix_i}$. Thus, denoting
$r:=\sum\limits_{i=1}^N{c_ir_i}$, we have
\[
(x,\widehat{z})\in B^s_{r}(x) \subseteq \mathop{\rm
  conv}\{B^s_{r_1}(x_1),\ldots, B^s_{r_N}(x_N)\}.
\]
On the other hand, let $(x,\widehat{z})\in \mathop{\rm
  conv}\{B^s_{r_1}(x_1),\ldots, B^s_{r_N}(x_N)\}$ so that for some
$(c_i)$ with $c_i\geq 0$, $\sum_i c_i = 1$,
$(x,\widehat{z})=\sum\nolimits_{i=1}^N{c_i(x_i,\widehat{z}_i)}$
where $\widehat{z}_i\in \mathbb{R}^s$ and $|\widehat{z}_i|\leq r_i$,
for $i=1,\ldots,N$.  Therefore,
\[
\abs{\widehat{z}} \leq \displaystyle\sum\limits_{i=1}^N{c_ir_i} \leq
T(x)^{1/s},
\]
so $(x,\widehat{z})\in\mathcal{K}^s_{T}$ as desired.
\end{proof}

\section{Rearrangements and Steiner convexity}


\label{Steiner-Convexity}

When an isoperimetric principle admits a proof by symmetrization, like
\eqref{Isoperimetric-Brunn-Minkowski} for example, it is often
meaningful to instead carry out such symmetrization on a suitable
(product) probability space.  In \cite{paopivrand}, a variety of
isoperimetric inequalities for convex sets are shown to admit stronger
stochastic forms. A key tool in this approach involves multiple
integral rearrangement inequalities of Rogers \cite{Rogers_single},
and Brascamp-Lieb-Luttinger \cite{BLL}. Christ's version
\cite{christestimates} of the latter is especially well-suited for
stochastic forms of isoperimetric inequalities; as in
\cite{paopivrand}, the following formulation is convenient for our
purpose.

\begin{Th}
  \label{RBLLC}
  Let $f_1,\ldots,f_N$ be non-negative integrable functions on
  $\mathbb{R}^n$ and $F:(\mathbb{R}^n)^N\rightarrow [0,\infty)$. Then
   \begin{eqnarray*}
    \lefteqn{\int\limits_{(\R^n)^N} F(x_1,\ldots,x_N)\prod_{i=1}^N
      f_i(x_i)dx_1\ldots dx_N}\\ & \geq & \int\limits_{(\R^n)^N}
    F(x_1,\ldots,x_N)\prod_{i=1}^N f^*_i(x_i)dx_1\ldots dx_N,
  \end{eqnarray*}whenever  $F$ satisfies the following condition:
  for each $\theta\in\Sp^{n-1}$ and all
  $Y:=\{y_1,\ldots,y_N\}\subseteq\theta^{\perp}$, the function
  $F_Y:\R^N\to[0,\infty)$ defined by
    \begin{equation*}
      F_{Y,\theta}(t_1,\ldots,t_N):=F(y_1+t_1\theta,\ldots,y_N+t_N\theta)
    \end{equation*} is even and quasi-convex.
\end{Th}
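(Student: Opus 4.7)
The plan is a Steiner-symmetrization argument for the $f_i$, which reduces via Fubini to a rearrangement lemma for 1D factors whose validity relies on the evenness and quasi-convexity hypothesis on $F$.

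Fix $\theta\in\Sp^{n-1}$ and write $x_i=y_i+t_i\theta$ with $y_i\in\theta^{\perp}$. Fubini rewrites the left-hand side of the theorem as
\begin{equation*}
\int_{(\theta^{\perp})^N}\left[\int_{\R^N} F_{Y,\theta}(t)\prod_{i=1}^N f_i(y_i+t_i\theta)\,dt\right]dY,
\end{equation*}
where $Y=(y_1,\ldots,y_N)$ and, by hypothesis, $F_{Y,\theta}$ is even and quasi-convex on $\R^N$. Granting the lemma
\begin{equation*}
(\star)\quad \int_{\R^N} G(t)\prod_i g_i(t_i)\,dt \;\geq\; \int_{\R^N} G(t)\prod_i g_i^*(t_i)\,dt
\end{equation*}
for any even, quasi-convex $G\colon\R^N\to[0,\infty)$ and any integrable $g_i\colon\R\to[0,\infty)$, applying $(\star)$ pointwise in $Y$ with $g_i(t)=f_i(y_i+t\theta)$ produces the Steiner symmetral $f_i(\,\cdot\,|\theta)$ on the right-hand side. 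Hence the full integral only decreases when each $f_i$ is replaced by $f_i(\,\cdot\,|\theta)$. A standard convergence result for iterated Steiner symmetrizations along a suitable sequence of directions then gives $f_i^*$ in the $L^1$-limit, and passing to the limit completes the proof.

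The main obstacle is $(\star)$. Two features make it delicate: (i) $G$ is only centrally even on $\R^N$, not even in each coordinate separately, so a coordinate-by-coordinate Hardy--Littlewood is unavailable; (ii) the weight is a tensor product of 1D functions rather than a general function on $\R^N$. My plan is the layer-cake decomposition $G(t)=\int_0^{\infty}\mathds{1}_{\{G>s\}}(t)\,ds$, exploiting that $C_s:=\{G\leq s\}$ is a centrally symmetric convex subset of $\R^N$. Using $\int g_i=\int g_i^*$, truncating $G$ at height $M$ and invoking monotone convergence as $M\to\infty$, $(\star)$ reduces to the inequality
\begin{equation*}
\int_{C_s}\prod_i g_i(t_i)\,dt \;\leq\; \int_{C_s}\prod_i g_i^*(t_i)\,dt
\end{equation*}
for every centrally symmetric convex body $C_s\subseteq\R^N$ of finite measure. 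This in turn is a rearrangement inequality of Brascamp--Lieb--Luttinger/Christ type, which I would attack by a further layer-cake of the $g_i$'s to indicator functions of intervals, followed by a simultaneous coordinate-wise Steiner symmetrization of $C_s$ and the 1D factors in $\R^N$, culminating in the 1D Hardy--Littlewood inequality applied after both the convex body and the weights have been made symmetric in each coordinate direction.
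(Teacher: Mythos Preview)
The paper does not prove this theorem; it is cited from Rogers, Brascamp--Lieb--Luttinger and Christ and invoked as a tool, so there is no in-paper argument to compare against. Your reduction via Fubini and iterated Steiner symmetrization of the $f_i$ down to the one-dimensional lemma $(\star)$, and the further layer-cake reduction of $(\star)$ to
\[
\int_{C_s}\prod_i g_i(t_i)\,dt \;\le\; \int_{C_s}\prod_i g_i^*(t_i)\,dt
\]
for centrally symmetric convex $C_s\subset\R^N$, is the standard route and is correct; this last display is precisely the $n=1$ instance of the theorem.

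The gap is in your proposed proof of this last inequality. Steiner-symmetrizing $C_s$ together with the one-dimensional factors in direction $e_j$ yields
$|C_s\cap\prod_i A_i|\le|S_{e_j}C_s\cap(A_1\times\cdots\times A_j^*\times\cdots\times A_N)|$, and iterating over all $j$ leaves $S_{e_N}\cdots S_{e_1}C_s$ on the right rather than $C_s$. No Hardy--Littlewood step can restore $C_s$: the $e_j$-sections of $C_s$ are intervals but not centred ones, since central symmetry of $C_s$ is global, not coordinate-wise---exactly the obstruction you flagged in (i). Indeed, running the same chain on the right-hand side (whose factors $A_i^*$ are already centred) gives $|C_s\cap\prod_i A_i^*|\le|S_{e_N}\cdots S_{e_1}C_s\cap\prod_i A_i^*|$ as well, so both sides are bounded by the same quantity and nothing is proved about their relative order. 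The argument that actually works keeps $C_s$ fixed and moves only the intervals: after layer-caking each $g_i$ to $\mathds{1}_{[a_i-r_i,\,a_i+r_i]}$, one shows that $a\mapsto\bigl|C_s\cap\prod_i[a_i-r_i,a_i+r_i]\bigr|$ is even in $a$ and, via the inclusion $\tfrac12\bigl[(C_s\cap(B+a))+(C_s\cap(B-a))\bigr]\subseteq C_s\cap B$ together with Brunn--Minkowski (or via Christ's continuous symmetrization), is maximized at $a=0$. Your outline needs this, or an equivalent device, in place of symmetrizing $C_s$.
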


The condition on $F$ allows the theorem to be proved via iterated
Steiner symmetrization; as in \cite{paopivrand}, we will call such
functions $F$ {\it Steiner convex} (which differs from the terminology
in \cite{christestimates}). This condition interfaces well with
generalizations of Steiner symmetrization like shadow systems, e.g.,
\cite{rogsheext, camcolgroanote}; see \cite{paopivrand} for further
background and references. For context, we recall only several
examples before treating the functionals involved in our main
theorems.

A fundamental example of a Steiner convex function is the absolute
value of the determinant, $F(x_1,\ldots,x_n)=\abs{\mathop{\rm
    det}([x_1,\ldots,x_n])}$ (when $N=n$); this is the key property
behind Busemann's random simplex inequality \cite{Bus:1953}.  More
generally, Groemer \cite{gromean} showed that for $N>n$, the
functional
\begin{equation}
  \label{eqn:Groemer_funct}
 F(x_1,\ldots,x_N) =\vol{\mathop{\rm conv}\{x_1,\ldots,x_N\}}
\end{equation}
is also Steiner convex. While the latter examples involve points
$x_i$, analogous results hold for convex hulls of Euclidean balls
\begin{equation*}
  B_{\rho_i}(x_i):=\{u\in \mathbb{R}^n:\abs{u-x_i}\leq
  \rho_i\}.
\end{equation*}
Indeed, in \cite{Pfiefer}, Pfiefer showed that for
$\rho_1,\ldots,\rho_N\geq 0$, the functional
\begin{equation}
  \label{eqn:Pfiefer}
  F(x_1,\ldots,x_N)= \vol{\mathop{\rm
      conv}\left\{B_{\rho_1}(x_1),\ldots,B_{\rho_N}(x_N)\right\}}
\end{equation}
satisfies the Steiner convexity property.

In \cite{paopivprob}, the functional in \eqref{eqn:Groemer_funct} was
generalzed to include operations beyond the convex hull. Namely, let
$C\subseteq \mathbb{R}^N$ be a compact convex set; for
$x_1,\ldots,x_N$, we view the $n\times N$ matrix $[x_1,\ldots,x_N]$ as
an operator from $\mathbb{R}^N$ to $\mathbb{R}^n$. Then
\begin{equation*}
  [x_1,\ldots,x_N]C =\Bigl\{\sum\nolimits_i
    c_ix_i:c=(c_i)\in C\Bigr\}
\end{equation*}and
\begin{equation}
  \label{eqn:XC}
  F(x_1,\ldots,x_N)= \vol{[x_1,\ldots,x_N]C}
\end{equation}
is Steiner convex \cite{paopivprob}. In particular, the functionals in
the stochastic Brunn-Minkowski inequality
\eqref{Probability-Brunn-Minkowski} and Groemer's inequality
\eqref{eqn:Groemer} fit naturally in this framework. Indeed, for
independent random vectors $X_1\ldots,X_N$ in $K$, and
$X_{N+1}$,$\ldots$, $X_{N+M}$ in $L$, we have
\begin{equation*}
  [K]_N=[X_1,\ldots,X_N]C_N
\end{equation*}and
\begin{eqnarray}
  [K]_N+[L]_M & = & [X_1,\ldots,X_N]C_N + [X_{N+1},\ldots,X_{N+M}]C_M
  \nonumber\\ & = & [X_1,\ldots,X_{N+M}](C_N+\widehat{C}_M),
  \label{eqn:hat_sum}
\end{eqnarray}
where $C_k=\mathop{\rm conv}\{e_1,\ldots,e_k\}$ for $k=N,M$ and
$\widehat{C}_M=\mathop{\rm conv}\{e_{N+1},\ldots,e_{N+M}\}$.

As noted in \cite{paopivrand}, convex operations on points in
\eqref{eqn:XC} can be combined with Euclidean balls (as in
\eqref{eqn:Pfiefer}) by using the notion of {\it
  $\mathcal{M}$-addition}. The latter operation was studied in depth
by Gardner, Hug and Weil in \cite{GarHugWei_M} as a unifying framework
for operations in $L_p$ and Orlicz Brunn-Minkowski theory; see, e.g.,
\cite{GHW_14, LYZ_Orlicz} for further references and background. For
$\mathcal{M}\subseteq\R^N$ and subsets $K_1,\ldots, K_N$ in $\R^n$,
their {\it $\mathcal{M}$-combination} is defined by
\begin{eqnarray*}
  \oplus_{\mathcal{M}}(K_1,\ldots,K_N) &=& \Bigl\{
  \displaystyle\sum\limits_{i=1}^N m_ix_i : x_i\in K_i,
  (m_1,\ldots,m_n)\in \mathcal{M} \Bigr\}.
\end{eqnarray*}
When $K_1,\ldots, K_N$ are convex and $\mathcal{M}$ is convex and
contained in the positive orthant, then
$\oplus_{\mathcal{M}}(K_1,\ldots,K_N)$ is convex \cite[Theorem
  6.1]{GarHugWei_M}.  With this notation, for $C = \mathcal{M}$,
\begin{equation*}
  \oplus_{C}(B_{\rho_1}(x_1),\ldots,B_{\rho_N}(x_N)) = \Bigl\{
  \displaystyle\sum\limits_{i=1}^N c_iu_i :u_i\in B_{\rho_i}(x_i),
  c=(c_i)\in C \Bigr\}.
\end{equation*}
To connect with the bodies of revolution $\mathcal{K}^s_f\subseteq
\mathbb{R}^n\times \mathbb{R}^s$ defined in \S \ref{exact-approx}, we
use $\mathcal{M}$-combinations of $s$-dimenisonal Euclidean balls
lying orthogonal to $\mathbb{R}^n$.  

\begin{Prop}
  \label{SC-of-rotation-with-fixed-heights}
  Let $\rho_1,\ldots,\rho_N\in[0,\infty)$ and $C$ a convex set contained in
  the positive orthant.  Then the function $F:(\R^n)^N\rightarrow
  [0,\infty)$ defined by \begin{equation}\label{functional}
      F(x_1,\ldots,x_N) = \vol{\oplus_C(B^s_{\rho_1}(x_1),\ldots,
      B^s_{\rho_N}(x_N))}
      \end{equation}
      is Steiner convex.
\end{Prop}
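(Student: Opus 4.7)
The plan is to show that, for each $\theta\in\Sp^{n-1}$ and each $Y=\{y_1,\ldots,y_N\}\subseteq\theta^{\perp}$, the slice
\begin{equation*}
F_{Y,\theta}(t)=\vol{E(t)},\qquad E(t):=\oplus_C\bigl(B^s_{\rho_1}(y_1+t_1\theta),\ldots,B^s_{\rho_N}(y_N+t_N\theta)\bigr),
\end{equation*}
is (i) even and (ii) convex (hence quasi-convex) as a function of $t\in\R^N$. The evenness is immediate: the orthogonal reflection $R$ of $\R^{n+s}$ that reverses the $\theta$-direction of $\R^n$ and fixes $\theta^{\perp}$ and $\R^s$ satisfies $R(B^s_{\rho_i}(y_i+t_i\theta))=B^s_{\rho_i}(y_i-t_i\theta)$ because $y_i\in\theta^{\perp}$; since $\oplus_C$ commutes with a common isometry applied to all summands, $R(E(t))=E(-t)$, giving $F_{Y,\theta}(t)=F_{Y,\theta}(-t)$.

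For the convexity, I would use a shadow-system argument. Fix $t,t'\in\R^N$ and set $t(\mu)=(1-\mu)t+\mu t'$. Lift each summand to $\R^{n+s+1}=\R^n\times\R^s\times\R$ by
\begin{equation*}
\widetilde{B}_i:=\bigl\{(y_i+t_i\theta,\widehat{z},t'_i-t_i):\widehat{z}\in\R^s,\,|\widehat{z}|\leq\rho_i\bigr\},
\end{equation*}
which is an affine translate of a standard $s$-ball in the $\widehat{z}$-coordinates and is therefore convex. The combined body $\widetilde{E}:=\oplus_C(\widetilde{B}_1,\ldots,\widetilde{B}_N)$ is convex in $\R^{n+s+1}$ by \cite[Theorem 6.1]{GarHugWei_M}. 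Define the shadow projection $\pi_\mu:\R^{n+s+1}\to\R^{n+s}$ by $\pi_\mu(x,\widehat{z},u)=(x+\mu u\,\theta,\widehat{z})$. Applying $\pi_\mu$ to a generic element $\sum_i c_i(y_i+t_i\theta,\widehat{z}_i,t'_i-t_i)$ of $\widetilde{E}$ produces
\begin{equation*}
\Bigl(\sum\nolimits_i c_i\bigl(y_i+t_i(\mu)\theta\bigr),\sum\nolimits_i c_i\widehat{z}_i\Bigr)\in E(t(\mu)),
\end{equation*}
and the reverse containment is just as direct, so $\pi_\mu(\widetilde{E})=E(t(\mu))$. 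Hence $\{E(t(\mu))\}_\mu$ is a shadow system in $\R^{n+s}$ along the direction $\theta$ with parent convex body $\widetilde{E}$, and the classical theorem of Shephard on shadow systems gives convexity of $\mu\mapsto\vol{E(t(\mu))}=F_{Y,\theta}(t(\mu))$. Since this holds on every line, $F_{Y,\theta}$ is convex in $t$, hence quasi-convex.

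The main technical point, and the part that needs care, is designing the lift $\widetilde{E}$ so that $\pi_\mu(\widetilde{E})$ equals $E(t(\mu))$ \emph{exactly}, rather than merely containing or being contained in it. Encoding the parameter increment $t'_i-t_i$ in the auxiliary coordinate of $\widetilde{B}_i$ is what makes the affine reparametrization $t_i\mapsto t_i(\mu)$ factor cleanly through the shadow projection, while at the same time keeping each $\widetilde{B}_i$ convex so that the Gardner-Hug-Weil theorem on $\mathcal{M}$-combinations can deliver convexity of $\widetilde{E}$. Once this lift is in hand, the reduction to the shadow-system convexity of volume is essentially immediate, and combining it with the evenness observation completes the proof that $F$ is Steiner convex.
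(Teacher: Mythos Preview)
Your proof is correct. The evenness argument matches the paper's exactly. For convexity, however, you take a genuinely different route.

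The paper argues directly at the level of midpoint convexity: it observes that the orthogonal projection of $E(a)$ onto $\theta^{\perp}\times\R^s$ is a set $D$ independent of $a$, introduces the upper and lower boundary functions $u_a,\ell_a:D\to\R$ of $E(a)$ in the $\theta$-direction, and checks by hand that $E\bigl(\tfrac{r+t}{2}\bigr)$ is contained in the region between $\tfrac{1}{2}(\ell_r+\ell_t)$ and $\tfrac{1}{2}(u_r+u_t)$, whose volume by Fubini equals $\tfrac{1}{2}\vol{E(r)}+\tfrac{1}{2}\vol{E(t)}$. This is entirely self-contained and uses nothing beyond elementary convexity and Fubini.

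Your argument instead packages the one-parameter family $\mu\mapsto E(t(\mu))$ as a shadow system by lifting to $\R^{n+s+1}$, invoking the Gardner--Hug--Weil theorem to get convexity of the parent body $\widetilde{E}$, and then appealing to Shephard's theorem that volume is convex along shadow systems. This is more conceptual and situates the result squarely within the shadow-system framework that the paper itself alludes to; the trade-off is that it imports two external results rather than proving the inequality from scratch. The paper's hands-on computation has the advantage of being explicit about \emph{why} the inequality holds (the common projection $D$ is the crux), while your approach makes transparent the connection to the broader machinery underlying Steiner convexity.
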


\begin{proof}
  We suppose without loss of generality that $\theta=e_1$, where
  $\{e_1,\ldots,e_{n}\}$ denotes the standard basis in $\R^n$.  Let
  $Y=\{y_1,\ldots,y_N\}\subseteq\theta^{\perp}$. We define the function
  $F_{Y}:\mathbb{R}^N\rightarrow [0,\infty)$ by
      \begin{eqnarray*}
	F_Y(t)&= &\vol{\oplus_C(B^s_{\rho_1}((t_1,y_1)),\ldots,
        B^s_{\rho_N}((t_N,y_N))}\\ & = &
        \vol{\left\{\sum\nolimits_{i=1}^{N}
        c_i(t_i,y_i,\widehat{z}_i):\abs{\widehat{z}_i}
        \leq \rho_i\right\}}.
      \end{eqnarray*}

To check that $F_Y$ is even we note that the sets involved in the
expressions for $F_Y(t)$ and $F_Y(-t)$ are reflections of each other
about $\theta^{\perp}$ and hence have equal volume.

To prove convexity, let $r,t\in \mathbb{R}^N$. For $a=(a_i)$ in
$\{r,t,\frac{r}{2}+\frac{t}{2}\}$, we write
\begin{equation*}
  \oplus_C(\{B_{\rho_i}^s(a_i,y_i)\}) :=
  \oplus_C(B^s_{\rho_1}((a_1,y_1)),\ldots, B^s_{\rho_N}((a_N,y_N))).
  \end{equation*}
Let $P:\mathbb{R}^{n+s}\rightarrow \theta^{\perp}\times \mathbb{R}^s$
denote the orthogonal projection.  Note that 
\begin{equation*}
  D := P(\oplus_C(\{B^s_{\rho_i}(a_i,y_i)\}))
\end{equation*}
is independent of $a$. Thus we define as in {\S \ref{prelim}},
$u_a,\ell_a:D\rightarrow \mathbb{R}$ by
\begin{equation*}
  u_a(v)=u(\oplus_C(\{B_{\rho_i}^s(a_i,y_i)\}),v)
\end{equation*}
and
\begin{equation*}
\ell_a(v)=\ell(\oplus_C(\{B_{\rho_i}^s(a_i,y_i)\}),v).
  \end{equation*}
  Next, we set $u=\frac{1}{2}(u_r+u_t)$ and
  $\ell=\frac{1}{2}(\ell_r+\ell_t)$ and define
  \begin{equation*}
    E :=\{(\lambda,v):v\in D,\ell(v)\leq\lambda\leq u(v)\}.
  \end{equation*}
   We claim that
   \begin{equation}
     \label{tildeset}
     \oplus_C\left(\left\{B^s_{\rho_i}
     \left(\tfrac{r_i+t_i}{2},y_i\right)\right\}\right)
     \subseteq E.
   \end{equation}
   To see this, let $w\in
   \oplus_C\left(\left\{B^s_{\rho_i}\left(\tfrac{r_i+t_i}{2},y_i\right)\right\}\right)$
   so that for some $c\in C$,
   \begin{equation*}
     \sum_{i=1}^N c_i\left(\tfrac{r_i+t_i}{2},y_i,\widehat{z}_i\right)
     = \Bigl(\sum_{i=1}^N
     c_i\bigl(\tfrac{r_i+t_i}{2}\bigr),y,\widehat{z}\Bigr),
   \end{equation*}where $y=\sum_{i}c_i y_i$ and
   $\widehat{z} = \sum_i c_i\widehat{z}_i$ Thus for $a\in\{r,t\}$, we
   have
   \begin{equation*}
     \sum_{i=1}^N c_i(a_i,y_i,\widehat{z}_i) =
     \Bigl(\sum_{i=1}^N c_ia_i,y,\widehat{z}\Bigr)\in
     \oplus_C(\{B^s_{\rho_i}(a_i,y_i)\}),
   \end{equation*}
   hence $\ell_a(0,y,\widehat{z})\leq \sum_{i=1}^N c_ia_i \leq
   u_a(0,y,\widehat{z})$. Thus,
   \begin{equation*}
   \ell(0,y,\widehat{z}) \leq
   \frac{1}{2}\sum_{i=1}^N c_ir_i
   +\frac{1}{2}\sum_{i=1}^N c_it_i \leq u(0,y,\widehat{z}),
   \end{equation*}
   which shows that $w\in E$ and establishes (\ref{tildeset}). Hence
\begin{eqnarray*}
  \vol{E} &=& \int_{D}{(u(v)-\ell(v))}dv\\ &=&
  \frac{1}{2}\int_{D}{(u_r-\ell_r)(v)}dv+
  \frac{1}{2}\int_{D}{(u_t-\ell_t)(v)}dv\\ &=&
  \frac{1}{2}\vol{\oplus_C(\{B_{\rho_i}^s(r_i,y_i)\})}+\frac{1}{2}
  \vol{\oplus_C(\{B_{\rho_i}^s(t_i,y_i)\})},
\end{eqnarray*}
which completes the proof.
\end{proof}

\section{Main proofs}

\begin{proof}
  [Proof of Theorem \ref{Theo-Stochastic-Groemer}] For
  $(x_1,z_1),\ldots,(x_N,z_N) \in \mathbb{R}^n\times [0,\infty)$,
    write $w_i=(x_i,z_i)$ and let $T_{\{w_i\}}$ be the least
    $\log$-concave function supported on the set $\mathbb{\rm
      conv}\{x_1,\ldots,x_N\}$ with $T(x_i)\geq z_i$, $i=1,\ldots,N$,
    i.e.,
    \begin{equation}
      \label{eqn:Tleastlog}
      T_{\{w_i\}}(x)=\exp(\sup\{z:(x,z)\in{\rm conv}\{(x_1,\log
      z_1),\ldots,(x_N,\log z_N)\}).
    \end{equation}
    With this notation, we set
    \begin{equation*}
      F(w_1,\ldots,w_N)=\int_{\R^n} T_{\{w_i\}}(x)dx.
    \end{equation*}
    Let $f:\R^n\to[0,\infty)$ be an integrable $\log$-concave
      function.  Sample independent random vectors $W_i=(X_i,Z_i)$,
      $i=1,\ldots, N$, according to the uniform Lebesgue measure on
      $G_f$ (cf. \eqref{eqn:graph}). Then the random function $[f]_N$
      defined as  in \eqref{Random-Least-Log-Concave} satisfies
    \begin{eqnarray*}
    {\Pro\left(\displaystyle\int_{\R^n}{[f]_N(x)}dx >\alpha \right)} &
    = & \mathbb{E} \mathds{1}_{\{ F>\alpha\}}(W_1,\ldots, W_N)\\
    & = & {\frac{1}{\norm{f}_1^N}\displaystyle
      \int_{N}{\mathds{1}_{\{F>\alpha\}}(\overline{w})}
      \prod\limits_{i=1}^N{\mathds{1}_{[0,f(x_i)]}(z_i)}}d\overline{w},
    \end{eqnarray*}
    where $\int_N$ denotes the integral on $(\R^n\times[0,\infty))^N$ and
    \begin{equation}
      \label{eqn:w}
      \overline{w}=(w_1,\ldots,w_N), \quad d\overline{w} = dw_1\ldots
      dw_N.
    \end{equation}

    It is sufficient to prove the theorem for
    $f_{\varepsilon}(x):=f(x)\mathds{1}_{\{f>\varepsilon\}}(x)$ for
    each fixed $\varepsilon >0$. To see this, note that
    $(f_{\varepsilon})^* = (f^*)_{\varepsilon}$. Thus if
    $W_i^{\varepsilon} = (X^{\varepsilon}_i,Z_i^{\varepsilon})$,
    $i=1,\ldots,N$ are independent random vectors distributed
    according to the uniform Lebesgue measure on
    $G_{f_{\varepsilon}}$, we have
    \begin{eqnarray*}
    {\Pro\left(\displaystyle\int_{\R^n}{[f_{\varepsilon}]_N(x)}dx
      >\alpha \right)} &= & \mathbb{E}
    \mathds{1}_{\{F>\alpha\}}(W_1^{\varepsilon},\ldots,W^{\varepsilon}_{N})\\ &
    = & {\frac{1}{\norm{f_{\epsilon}}_1^N}\displaystyle
      \int_{N}{\mathds{1}_{\{F\}>\alpha\}}(\overline{w})}\prod\limits_{i=1}^N{\mathds{1}_{[\varepsilon,f(x_i)]}(z_i)}}d\overline{w}.
  \end{eqnarray*}
Since $\norm{f_{\varepsilon}}_1\rightarrow \norm{f}_1$ and
$f_{\varepsilon}\leq f$, dominated convergence implies
\begin{equation*}
  \Pro\left( \displaystyle\int_{\R^n}{[f]_N(x)}dx >\alpha \right)=
  \underset{\epsilon\to0}{\lim}~\Pro\left(
  \displaystyle\int_{\R^n}{[f_{\varepsilon}]_N(x)}dx >\alpha \right).
\end{equation*}
For $s>0$ and $x\in \mathbb{R}^n$, we define
\begin{equation}
  \label{Stochastic-s-approx}
  [f_{\epsilon}]_{N,s}(x) = \left(
  1+\frac{\log{[f_{\epsilon}]_{N}(x)}}{s} \right)_{+}^s.
\end{equation}
For every $x$, we have almost sure convergence
$[f_{\varepsilon}]_{N,s}(x) \rightarrow [f_{\varepsilon}]_{N}(x)$ as
$s\rightarrow \infty$. Since $[f_{\varepsilon}]_{N,s},
[f_{\varepsilon}]_{N}$ are dominated by $f$, this implies almost sure
convergence of the integrals
\begin{equation}
  \label{eqn:as_int}
  \int_{\mathbb{R}^n} [f_{\varepsilon}]_N(x) dx =
  \lim\limits_{s \rightarrow 0}\int_{\mathbb{R}^n}
             [f_{\varepsilon}]_{N,s}(x)dx.
\end{equation}
The latter implies that 
    \begin{equation*}
      \Pro\left(\int_{\mathbb{R}^n} [f_{\varepsilon}]_N(x)dx \leq
      \alpha\right) = \lim_{s\rightarrow
        \infty}\Pro\left(\int_{\mathbb{R}^n}
                  [f_{\varepsilon}]_{N,s}(x)dx \leq \alpha \right) 
    \end{equation*}at all continuity points of 
    \begin{equation}
      \label{eqn:feps}
      [0,\infty)\ni a\mapsto \Pro\left(\int_{\mathbb{R}^n}
        [f_{\varepsilon}]_N(x)dx \leq a\right).
  \end{equation}
However, since $N>n+1$ and $f_{\varepsilon}$ is absolutely continuous
with respect to Lebesgue measure, the random variable
$\int_{\mathbb{R}^n}[f_{\varepsilon}]_{N}(x)dx$ is positive almost
surely and \eqref{eqn:feps} is continuous on $[0,\infty)$.

Thus it is sufficient to prove the theorem for
$[f_{\varepsilon}]_{N,s}$ with $s$ large enough. For
$s>-\log{\epsilon}$, we have
$R_i:=R_i(Z_1^{\varepsilon}):=(1+\frac{\log{{Z}^{\varepsilon}_i}}{s})>0$ for
all $i\in\{1,\ldots,N\}$. Then
\begin{equation*}
\left[f_{\epsilon}\right]_{N,s}(x) = \sup\{z^s\in\R:(x,z)\in
H_{f_{\epsilon},N,s}\},
  \end{equation*}where 
\begin{eqnarray*}
  H_{f_{\epsilon},N,s} &:=& {\rm
    conv}\left\{\left(X_1^{\varepsilon},R_1(Z_1^{\varepsilon})\right),
  \ldots,\left(X_N^{\varepsilon},
  R_N(Z_N^{\varepsilon})\right)\right\}.
\end{eqnarray*} 
By Lemma \ref{lemma:rotation},
\begin{equation*}
  \mathcal{K}^s_{[f_{\epsilon}]_{N,s}} = \mathop{\rm conv}\{
  B^s_{R_1}({X}_1^{\varepsilon}), \ldots,
  B^s_{R_N}({X}_N^{\varepsilon})\}.
\end{equation*}
By \eqref{Volume-Integral},
\begin{equation}
  \label{St-Volume-Integral}  
  \int_{\mathbb{R}^n} [f_{\varepsilon}]_{N,s}(x)dx =
  \kappa_s^{-1}\vol{\mathcal{K}^s_{[f_{\varepsilon}]_{N,s}}}.
\end{equation}
For $z_1,\ldots,z_N\in [0,\infty)$, we write
$\rho_i=\rho_i(z_i)=(1+\tfrac{\log z_i}{s})$ and define
\begin{equation*}
  F_s((x_1,z_1),\ldots,(x_N,z_N)) = \vol{\mathop{\rm conv}\{
  B^s_{\rho_1}(x_1), \ldots, B^s_{\rho_N}(x_N)\}}.
\end{equation*} 
Then for each fixed $z_1,\ldots,z_N$,
\begin{equation*}
  (x_1,\ldots,x_N)\mapsto F_s((x_1,z_1),\ldots,(x_N,z_N))
\end{equation*}
 is a Steiner convex function by Proposition
 \ref{SC-of-rotation-with-fixed-heights}. Applying Fubini, writing
 $d\overline{w} = d\overline{x}d\overline{z}$ and invoking Theorem
 \ref{RBLLC}, we obtain
\begin{eqnarray*}
  \Pro\left( \vol{\mathcal{K}^s_{[f_{\varepsilon}]_{N,s}}} >\alpha \right)
  &=& \frac{1}{\norm{f_{\varepsilon}}_1^N}\displaystyle
  \int_{N}\mathds{1}_{\{F_s>\alpha\}}(\overline{w})
    \prod\limits_{i=1}^N\mathds{1}_{[\varepsilon,f(x_i)]}(z_i)
    d\overline{w}\\ &=&
    \frac{1}{\norm{f_{\varepsilon}}_1^N}\int_{[0,\infty)^N}{\left(
        \int_{(\R^n)^N}{\mathds{1}_{\{F_s>\alpha\}}
          \prod\limits_{i=1}^N{\mathds{1}_{[\varepsilon,f(x_i)]}(z_i)}}d\overline{x}
        \right)}d\overline{z}\\ &\geq &
      \frac{1}{\norm{f_{\varepsilon}^*}_1^N}\int_{[0,\infty)^N}{\left(
          \int_{(\R^n)^N}{\mathds{1}_{\{F_s>\alpha\}}
            \prod\limits_{i=1}^N{\mathds{1}_{[\varepsilon,f^*(x_i)]}(z_i)}}d\overline{x}
          \right)}d\overline{z}\\ &= &
        \frac{1}{\norm{f_{\varepsilon}^*}_1^N}\displaystyle
        \int_{N}\mathds{1}_{\{F_s>\alpha\}}(\overline{w})
          \prod\limits_{i=1}^N\mathds{1}_{[\varepsilon,f^*(x_i)]}(z_i)
          d\overline{w}\\ & = &\Pro\left(
          \vol{\mathcal{K}^s_{[f^*_{\varepsilon}]_{N,s}}} >\alpha \right).
\end{eqnarray*}	
The result now follows from \eqref{St-Volume-Integral} applied to
$f_{\varepsilon}^*$.
\end{proof}

\begin{proof}[Proof of Theorem~\ref{Theo-Stochastic-PL}]
  For $w_i=(x_i,z_i)\in \mathbb{R}^n\times\mathbb{R}^s$,
  $i=1,\ldots,M+N$, we set 
  \begin{equation*}
    F(w_1,\ldots,w_{N+M})=\int_{\R^n}T^{(N)}_{\{w_i\}}\star_{\lambda}
    T^{(M)}_{\{w_i\}}(v)dv,
  \end{equation*}
  where $T^{(N)}_{\{w_i\}}$ and $T^{(M)}_{\{w_i\}}$ are the least
  $\log$-concave functions above the collections $\{w_i\}_{i\leq N}$
  and $\{w_i\}_{N+1\leq i\leq M}$, respectively (as defined as in
  \eqref{eqn:Tleastlog}).  Then the stochastic approximations
  $[f]_{N}$, $[g]_{M}$ to $f$, $g$ satisfy
  \begin{eqnarray*}
    \lefteqn{\Pro\left(\int_{\R^n}[f]_N\star_{\lambda}[g]_M(v)
      dv>\alpha\right)}\\
    & & = \frac{1}{\prod_{i=1}^{M+N}\norm{h_i}_1}
    \int_{N+M}\mathds{1}_{\{F>\alpha\}}(\overline{w})
      \prod\limits_{i=1}^{N+M}\mathds{1}_{[0,h_i(x_i)]}(z_i) d\overline{w},
  \end{eqnarray*}
  where $\int_{N+M}$ is the integral on
  $(\R^n\times[0,\infty))^{N+M}$, $\overline{w}$ and $d\overline{w}$
    are as in \eqref{eqn:w}, $h_i=f_i$ for $i=1,\ldots, N$ and
    $h_i=g_i$ for $i=N+1,\ldots,N+M$. For $\varepsilon >0$, we apply
    the latter identity with $f_{\varepsilon}$ and $g_{\varepsilon}$
    and use dominated convergence to get
    \begin{equation*}
      \Pro\left(\int_{\R^n}[f]_N\star_{\lambda}[g]_M(v) dv>\alpha\right)
      = \lim\limits_{\varepsilon \rightarrow 0}
      \Pro\left(\int_{\R^n}[f_{\epsilon}]_N\star_{\lambda}[g_{\epsilon}]_M(v)
      dv>\alpha\right).
    \end{equation*}
For $\varepsilon >0$, we sample independent random vectors
$\{(X_i^{\varepsilon},Z_i^{\varepsilon})\}_{i=1}^{N}$ uniformly in
$G_{f_{\varepsilon}}$ and
$\{(X_i^{\varepsilon},Z_i^{\varepsilon})\}_{i=N+1}^{N+M}$ uniformly in
$G_{g_{\varepsilon}}$. For $s>-\log \varepsilon$, we define
$[f_{\epsilon}]_{N,s}$, and $[g_{\epsilon}]_{M,s}$ as in
\eqref{Stochastic-s-approx}. Note that for each $v\in \mathbb{R}^n$,
we have almost sure convergence \begin{equation*}
  ([f_{\epsilon}]_N\star_{\lambda}[g_{\epsilon}]_M)(v) =
  \lim\limits_{s\to\infty}([f_{\epsilon}]_{N,s}\star_{\lambda,s}[g_{\epsilon}]_{M,s})(v).
\end{equation*}
Since $[f_{\epsilon}]_{N,s}\star_{\lambda,s}[g_{\epsilon}]_{M,s}$ and
$[f_{\epsilon}]_{N}\star_{\lambda}[g_{\epsilon}]_{M}$ are dominated
by $f\star_{\lambda} g$, we can argue as in the proof of Theorem
\ref{Theo-Stochastic-Groemer} to get, for each $\alpha\geq 0$,
\begin{eqnarray*}
  \lefteqn{\Pro\left(
  \int_{\mathbb{R}^n}([f_{\epsilon}]_N\star_{\lambda}[g_{\epsilon}]_M)(v)dv\leq
  \alpha \right)}\\
  & & = \lim\limits_{s \rightarrow \infty} \Pro\left(
  \int_{\mathbb{R}^n}([f_{\epsilon}]_{N,s}\star_{\lambda,s}[g_{\epsilon}]_{M,s})(v)dv\leq
  \alpha \right).
\end{eqnarray*}
Since $[f_{\epsilon}]_{N,s}\star_{\lambda,s}[g_{\epsilon}]_{M,s}$ is
$s$-concave, the body
$\mathcal{K}^s_{[f_{\epsilon}]_{N,s}\star_{\lambda,s}[g_{\epsilon}]_{M,s}}$
is convex. Moreover, by \eqref{Volume-Integral},
\begin{equation}
  \label{eqn:star_lam}
  \int_{\mathbb{R}^n}([f_{\epsilon}]_{N,s}\star_{\lambda,s}[g_{\epsilon}]_{M,s})(v)dv =
  \kappa_s^{-1}\vol{K_{[f_{\epsilon}]_{N,s}\star_{\lambda,s}[g_{\epsilon}]_{M,s}}}.
\end{equation}
Recalling \eqref{eqn:Ksum}, we have
\begin{eqnarray*}
  \mathcal{K}^s_{[f_{\epsilon}]_{N,s}\star_{\lambda,s}[g_{\epsilon}]_{M,s}}
  & = &\lambda \mathcal{K}^s_{[f_{\epsilon}]_{N,s}}+ (1-\lambda)
  \mathcal{K}^s_{[g_{\varepsilon}]_{M,s}} \\& = & 
  \mathcal{K}^s_{[f_{\epsilon}]_{N,s}}+_{\lambda}\mathcal{K}^s_{[g_{\varepsilon}]_{M,s}}.
\end{eqnarray*}
Set $R_i:=R_i(Z_i):=\left(1 +\frac{\log Z_i^{\varepsilon}}{s}\right)$.  By Lemma
\ref{lemma:rotation}, we have
\begin{eqnarray*}
  \mathcal{K}^s_{[f_{\epsilon}]_{N,s}}
& =& \mathop{\rm
    conv}\{B^s_{R_1}(X_1),\ldots,B^s_{R_N}(X_N)\}\\
  &= & \oplus_{C_N}(\{B^s_{R_i}(X_i)\}_{i=1}^N),
\end{eqnarray*} where $C_N={\rm conv}\{e_1,\ldots,e_N\}$.  Similarly,
\begin{eqnarray*}
  \mathcal{K}^s_{[g_{\epsilon}]_{M,s}} &=& \mathop{\rm
    conv}\{B^s_{R_{N+1}}(X_{N+1}),\ldots,B^s_{R_{N+M}}(X_{N+M})\}\\ &=
  & \oplus_{C_M}(\{B^s_{R_i}(X_i)\}_{i=N+1}^M),
\end{eqnarray*}where $C_M={\rm conv}\{e_1,\ldots,e_M\}$.
Thus if we write $\widehat{C}_M ={\rm
  conv}\{e_{N+1},\ldots,e_{N+M}\}$, (which is similar to
\eqref{eqn:hat_sum}), we have
\begin{eqnarray*}
  \mathcal{K}^s_{[f_{\epsilon}]_{N,s}\star_{\lambda,s}[g_{\epsilon}]_{M,s}}
  &=& \oplus_{C_N}(\{B^s_{R_i}(X_i)\}_{i=1}^N) +_{\lambda}
  \oplus_{C_M}(\{B^s_{R_i}(X_i)\}_{i=N+1}^{N+M})\\ &=&
  \oplus_{C_N+_{\lambda}
    \widehat{C}_M}(\{B^s_{R_i}(X_i)\}_{i=1}^{N+M}).
    \end{eqnarray*}
For $z_1,\ldots,z_{N+M}\in [0,\infty)$, write
$\rho_i=\rho_i(z_i)=(1+\tfrac{\log z_i}{s})$ and define
\begin{equation*}
  F_s((x_1,z_1),\ldots,(x_{N+M},z_{N+M})) =
  \vol{\oplus_{C_N+_{\lambda}\widehat{C}_M}(\{B^s_{\rho_i}(x_i)\}_{i=1}^{N+M})}.
\end{equation*} Then for each fixed $z_1,\ldots,z_{N+M}$,  the function
\begin{eqnarray*}
(x_1,\ldots,x_{N+M})\mapsto F_s((x_1,z_1),\ldots,(x_{N+M},z_{N+M}))
\end{eqnarray*} 
is Steiner convex by Proposition
\ref{SC-of-rotation-with-fixed-heights}.  By Fubini and Theorem
\ref{RBLLC}, we have
\begin{eqnarray*}
  \lefteqn{\Pro\left(
    \vol{\mathcal{K}^s_{[f_{\varepsilon}]_{N,s}\star_{\lambda,s}
        [g_{\varepsilon}]_{N,s}}} >\alpha \right)}\\ & &
  =\frac{1}{\prod_{i=1}^{N+M}\norm{(h_i)_{\varepsilon}}_1}\displaystyle
  \int_{N+M}\mathds{1}_{\{F_s>\alpha\}}(\overline{w})
  \prod\limits_{i=1}^{N+M}\mathds{1}_{[\varepsilon,h_i(x_i)]}(z_i)
  d\overline{w}\\ & & =
  \frac{1}{\prod_{i=1}^{N+M}\norm{(h_i)_{\varepsilon}}_1}\int_{[0,\infty)^{N+M}}{\left(
      \int_{(\R^n)^{N+M}}{\mathds{1}_{\{F_s>\alpha\}}(\overline{w})
        \prod\limits_{i=1}^{N+M}{\mathds{1}_{[\varepsilon,h_i(x_i)]}(z_i)}}d\overline{x}
      \right)}d\overline{z}\\ & & \geq
    \frac{1}{\prod_{i=1}^{N+M}\norm{(h_i)_{\varepsilon}^*}_1}
    \int_{[0,\infty)^{N+M}}{\left(
        \int_{(\R^n)^{N+M}}{\mathds{1}_{\{F_s>\alpha\}}(\overline{w})
          \prod\limits_{i=1}^{N+M}{\mathds{1}_{[\varepsilon,h_i^*(x_i)]}(z_i)}}d\overline{x}
        \right)}d\overline{z}\\ & & =
      \frac{1}{\prod_{i=1}^{N+M}\norm{(h_i)_{\varepsilon}^*}_1}\displaystyle
      \int_{N+M}\mathds{1}_{\{F_s>\alpha\}}(\overline{w})
      \prod\limits_{i=1}^{N+M}\mathds{1}_{[\varepsilon,h_i^*(x_i)]}(z_i)
      d\overline{w}\\ & & = \Pro\left(
      \vol{\mathcal{K}^s_{[f^*_{\varepsilon}]_{N,s}\star_{\lambda,s}[g^*_{\varepsilon}]_{N,s}}}
      >\alpha \right).
\end{eqnarray*}	
The result now follows from \eqref{eqn:star_lam} applied to
$f_{\varepsilon}^*$ and $g_{\varepsilon}^*$.

\end{proof}

\noindent {{\bf Acknowledgments} It is our pleasure to thank Dario
  Cordero-Erausquin, Matthieu Fradelizi, Mokshay Madiman and Grigoris
  Paouris for helpful correspondence, discussions and feedback.
  P.\ Pivovarov was supported by NSF grant DMS-1612936 and Simons
  Foundation grant \#635531.}

%
%

\addcontentsline{toc}{section}{References} \bibliographystyle{plain}
\bibliography{biblio1}

\end{document}